\newtheorem{thm}{Theorem}[section]
\newtheorem{lemma}[thm]{Lemma}
\newtheorem{defn}[thm]{Definition}
\newtheorem{notation}[thm]{Notation}
\theoremstyle{plain}
\numberwithin{equation}{subsection}
\theoremstyle{remark}
\newtheorem{rem}[thm]{Remark}
\newcommand{\N}{{\mathbb N}}
\newcommand{\Q}{{\mathbb Q}}
\newcommand{\C}{\mathbb C}
\newcommand{\Z}{{\mathbb Z}}
\newcommand{\Fq}{\mathbb F_q}
\title[The isotrivial case in the Mordell-Lang conjecture]{The isotrivial case in the Mordell-Lang conjecture for semiabelian varieties defined over  fields of positive characteristic}
\author{Dragos Ghioca}
\address{Department of Mathematics \\ University of British Columbia \\ 1984 Mathematics Road \\ Canada V6T 1Z2}
\email{dghioca@math.ubc.ca}
\subjclass[2020]{Primary: 11G10; Secondary: 14G17}
\keywords{semiabelian varieties; finite fields; Mordell-Lang conjecture}
\begin{document}

\begin{abstract}
Let $G$ be a semiabelian variety defined over a finite subfield of an algebraically closed  field $K$ of prime characteristic. 
We  describe the intersection of a subvariety $X$ of $G$ with a finitely generated subgroup of $G(K)$.  
\end{abstract}

\maketitle

%%%%%%%%%%%%%%%%%%%%%%%%%%%%%%%%%%%%%%%%%%%%%%%%%%%%%%%%%%%%%%%%%%%%%%%%%%%%%%%%
%%%%%%%%%%%%%%%%%%%%%%%%%%%%%%%%%%%%%%%%%%%%%%%%%%%%%%%%%%%%%%%%%%%%%%%%%%%%%%%%

\section{Introduction}

The classical Mordell-Lang conjecture for semiabelian varieties $G$ defined over an algebraically closed field $K$ of characteristic $0$ (now a theorem due to Laurent \cite{Laurent} for algebraic  tori, to  Faltings \cite{Faltings} for abelian varieties, and to Vojta \cite{Vojta} for the general case of semiabelian varieties) says that the intersection of a subvariety $X$ of $G$ with a finitely generated subgroup $\Gamma$ of $G(K)$ is a finite union of cosets of subgroups of $\Gamma$. The statement in positive characteristic (i.e., when $K$ is an algebraically closed field of characteristic $p$) fails; see Hrushovski \cite{Hrushovski}. When $G$ is defined over a finite subfield $\Fq$ of $K$, Moosa and Scanlon \cite{F-sets} described the aforementioned intersection $X(K)\cap\Gamma$ under the additional assumption that $\Gamma$ is mapped into itself by a power of the Frobenius endomorphism $F$ of $G$ corresponding to $\Fq$, i.e., $\Gamma$ is a finitely generated $\Z[F^\ell]$-submodule of $G(K)$ (for a suitable positive integer $\ell$). The precise description of the intersection $X(K)\cap\Gamma$ is a finite union of \emph{$F$-sets} (see Definition~\ref{def:F0}), as given in Theorem~\ref{thm:R-T}.  In \cite{previous}, it was shown that if $\Gamma$ is not a $\Z[F^\ell]$-submodule (for any $\ell\in\N$), then the structure of the intersection $X(K)\cap\Gamma$ can be quite wild (see \cite[Section~2]{previous}); in particular, \cite[Examples~2.1,~2.2,~2.3]{previous} show that the intersection $X(K)\cap\Gamma$ is no longer a finite union of $F$-sets (as opposed to what the author claimed erroneously in an earlier paper~\cite{TAMS}). Furthermore, a geometric description of the intersection $X(K)\cap\Gamma$ was proven in \cite{previous}, even without assuming that $G$ is defined over a finite subfield of $K$, but with the disadvantage that this description is not intrinsic to the subgroup $\Gamma$ and instead it relies on the geometry of $G$.  In the present paper, we obtain an explicit description (see  Theorem~\ref{thm:main}) of the intersection $X(K)\cap\Gamma$, in the spirit of the original description of Moosa-Scanlon \cite{F-sets}.

%%%%%%%%%%%%%%%%%%%%%%%%%%%%%%%%%%%%%%%%%%%%%%%%%%%%%%%%%%%%%%%%%%%%%%%%%%%%%%%%

\subsection{The intersection of a subvariety of a semiabelian variety defined over a finite field with a finitely generated subgroup invariant under a power of the Frobenius endomorphism}

\begin{notation}
\label{not:semi}
From this point on, we fix a semiabelian variety $G$ defined over a finite subfield $\Fq$ of an algebraically closed field $K$. We let $F$ be the Frobenius endomorphism of $G$ corresponding to $\Fq$.
\end{notation}

In order to state the result of Moosa-Scanlon \cite{F-sets}, we introduce the notion of $F$-sets.

\begin{defn}
\label{def:F0}
A \emph{groupless $F$-set} is any subset of $G(K)$ of the form:
\begin{equation}
\label{eq:F-orbits}
\left\{\alpha_0+\sum_{i=1}^r F^{k_in_i}(\alpha_i)\colon n_i\in\mathbb{N}\right\},
\end{equation}
for given integers $r\ge 0$ and $k_i\ge 1$, and given points $\alpha_0,\alpha_1,\dots, \alpha_r\in G(K)$. For any finitely generated subgroup $\Gamma\subset G(K)$,  we define a groupless $F$-set in $\Gamma$ (based in $G(K)$) as a groupless $F$-set contained in $\Gamma$. Also, an $F$-set in $\Gamma$ (based in $G(K)$) is any set of the form $S+B$, where $S$ is a groupless $F$-set in $\Gamma$ and $B$ is a subgroup of $\Gamma$ (as always, for any two subsets $A$ and $B$ of a given group, we have that $A+B$ is the set of all elements $a+b$ where $a\in A$ and $b\in B$). 
\end{defn}

Next, we state the main result of Moosa-Scanlon \cite{F-sets}.

\begin{thm}[Moosa-Scanlon \cite{F-sets}]
\label{thm:R-T}
Let $G$, $K$, $\Fq$ and $F$ be as in Notation~\ref{not:semi}. Let $X\subseteq G$ be a subvariety defined over $K$ and let $\Gamma\subset G(K)$ be a finitely generated subgroup with the property that there exists $\ell\in\N$ such that $F^\ell(\Gamma)\subseteq \Gamma$. Then $X(K)\cap\Gamma$ is a finite union of $F$-sets in $\Gamma$. 
\end{thm}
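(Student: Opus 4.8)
The strategy is to run a double induction --- an outer induction on $\dim G$ and, for each fixed $G$, a Noetherian induction on $X$ --- reducing to a single geometric ``core case'' which is settled by appealing to Hrushovski's theorem on the Mordell--Lang conjecture in positive characteristic \cite{Hrushovski}; the remaining work is combinatorial bookkeeping that repackages the outcome as a finite union of $F$-sets in $\Gamma$.

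I would begin with the soft reductions. As the class of finite unions of $F$-sets in $\Gamma$ is closed under finite unions, we may assume $X$ is irreducible; and by a routine Noetherian induction --- replacing $X$ by the Zariski closure of $X(K)\cap\Gamma$ whenever the latter is not already Zariski dense in $X$, which does not change the intersection and strictly decreases $\dim X$ --- we may assume that $X(K)\cap\Gamma$ is Zariski dense in $X$. Let $H\le G$ be the identity component of $\mathrm{Stab}_G(X)$; it is a connected algebraic subgroup of the semiabelian variety $G$, hence defined over a finite extension of $\Fq$, so after replacing $F$ by a suitable power (legitimate, since a groupless $F^m$-set is a fortiori a groupless $F$-set and $\Gamma$ remains invariant) we may assume $F(H)=H$, and then $F$ descends to the semiabelian variety $\bar G:=G/H$ over a finite subfield. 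If $\dim H>0$, let $\pi\colon G\to\bar G$ and put $\bar X:=\pi(X)$ (which has finite stabilizer) and $\bar\Gamma:=\pi(\Gamma)$ (finitely generated, with $F^\ell(\bar\Gamma)\subseteq\bar\Gamma$); since $H$ stabilizes $X$ we have $X=\pi^{-1}(\bar X)$, hence $X(K)\cap\Gamma=(\pi|_\Gamma)^{-1}\!\bigl(\bar X(K)\cap\bar\Gamma\bigr)$. By the outer induction $\bar X(K)\cap\bar\Gamma$ is a finite union of $F$-sets in $\bar\Gamma$, and a short Pullback Lemma then concludes this case: lifting the base points of a groupless $F$-set in $\bar\Gamma$ to $\Gamma$, where they remain under iteration of $F^\ell$ because $F^\ell(\Gamma)\subseteq\Gamma$, one finds that the preimage of an $F$-set in $\bar\Gamma$ is a groupless $F$-set in $\Gamma$ translated by, and enlarged by, a subgroup of $\Gamma$ --- hence again a finite union of $F$-sets in $\Gamma$. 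This reduces us to the core case: $X$ irreducible, with finite stabilizer, and $X(K)\cap\Gamma$ Zariski dense in $X$.

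The core case is where the positive-characteristic geometry enters, and here I would invoke Hrushovski's theorem on Mordell--Lang in characteristic $p$ \cite{Hrushovski}, applied to the semiabelian variety $G$ defined over $\Fpbar$ and the finite-rank group $\Gamma$. Unlike in characteristic $0$, the answer is not governed by cosets alone: the Frobenius $F$ genuinely intervenes. The phenomenon to keep in mind is the curve $X=\{x+y=1\}$ in $\bG_m^2$ together with $\Gamma=\langle(t,1),(1,t),(1-t,1),(1,1-t)\rangle$ for a transcendental $t$, for which the identity $t^{p^k}+(1-t)^{p^k}=(t+(1-t))^{p^k}=1$ shows that $X(K)\cap\Gamma$ contains the infinite groupless $F$-set $\{F^k(t,1-t)\colon k\in\N\}$, which is not a finite union of cosets. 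Thus the description furnished by Hrushovski's theorem must, and does, exhibit such $F$-orbits; what I would extract from it is that $X(K)\cap\Gamma$, outside a subvariety of strictly smaller dimension (absorbed by the Noetherian induction), is a finite union of $F$-sets in $\Gamma$ of the special shape $S_j+\bigl(H_j(K)\cap\Gamma\bigr)$, with $S_j$ a groupless $F$-set and $H_j\le G$ a semiabelian subvariety.

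What remains is to verify the closure properties of the class of finite unions of $F$-sets in $\Gamma$ that make all the above manipulations legitimate, and to assemble the two inductions. One needs closure under translation by an element of $\Gamma$, under adding a subgroup of $\Gamma$, under the pullbacks along $\pi|_\Gamma$ used above, and --- the most delicate point --- under intersection with a coset of a subgroup of $\Gamma$; the last forces one to work inside the finitely generated group $\Gamma$, on which $F^\ell$ acts through an integer matrix all of whose eigenvalues have absolute value a power of $\sqrt q$ (by the Weil bounds, an ``expanding'' matrix), and to show that the tuples $(n_i)$ for which a combination $\sum_i F^{k_i n_i}(\alpha_i)$ falls into a prescribed coset themselves carve out groupless $F$-sets. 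In my view the main obstacle is not any individual step but their interface: extracting from the (model-theoretic) content of Hrushovski's theorem the precise $F^\ell$-orbit shape of the Zariski-dense part of $X(K)\cap\Gamma$, doing so uniformly enough to sustain both inductions, and keeping careful track that every point produced lies inside $\Gamma$ rather than merely inside $G(K)$. This bookkeeping --- rather than any further geometric input --- is the technical heart of the argument.
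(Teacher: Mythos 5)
You are proposing a proof of a statement that the paper itself does not prove: Theorem~\ref{thm:R-T} is quoted from Moosa--Scanlon \cite{F-sets} and used as a black box in the deduction of Theorem~\ref{thm:main}, so the only meaningful comparison is with the original argument of \cite{F-sets}.

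Measured against that, your outer scaffolding (reduce to $X$ irreducible with dense intersection, quotient by the connected stabilizer after passing to a power of $F$, pull back $F$-sets along $\pi|_\Gamma$) is the standard and correct reduction, but the core case contains a genuine gap. Hrushovski's theorem \cite{Hrushovski} is a purely geometric statement: if $X$ has trivial stabilizer and $X(K)\cap\Gamma$ is Zariski dense (for $\Gamma$ of finite rank), then $X$ is \emph{special}, i.e.\ a translate of (the preimage under a homomorphism of) a subvariety defined over a finite field. Since here $G$ is already defined over $\Fq$, this tells you only that $X$ itself is, up to translation, an $\Fq$-subvariety; it says nothing about how the points of $\Gamma$ lying on $X$ are organized, and in particular it does not deliver a decomposition of $X(K)\cap\Gamma$, off a smaller-dimensional subvariety, into sets $S_j+\bigl(H_j(K)\cap\Gamma\bigr)$ with $S_j$ a groupless $F$-set. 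That step is not ``extracted'' from Hrushovski --- it \emph{is} the content of Moosa--Scanlon's theorem: one must show that the solutions $(n_1,\dots,n_r)$ and $b$ of $\sum_i F^{k_in_i}(\alpha_i)+b\in X(K)$, for $X$ an $\Fq$-variety and $\Gamma$ a finitely generated $\Z[F^\ell]$-module, are themselves parametrized by $F$-sets, which in \cite{F-sets} requires a substantial new argument (the analysis of sums of $F$-orbits via the Weil-number eigenvalues of $F$ acting on $\Gamma$ and the inductive construction of their ``$F$-structures''), not combinatorial bookkeeping. The same applies to the closure property you defer --- that the intersection of an $F$-set with a coset of a subgroup of $\Gamma$ is again a finite union of $F$-sets --- which is a nontrivial theorem of the same flavour as Theorem~\ref{thm:G} of this paper rather than a routine verification. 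As written, the proposal therefore assumes, in the dense trivial-stabilizer case, essentially the statement being proved.
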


%%%%%%%%%%%%%%%%%%%%%%%%%%%%%%%%%%%%%%%%%%%%%%%%%%%%%%%%%%%%%%%%%%%%%%%%%%%%%%%%

\subsection{The intersection of a subvariety of a semiabelian variety with an arbitrary finitely generated subgroup}

In order to state our main result regarding the intersection of a subvariety of  $G$ with an arbitrary finitely generated subgroup of $G$, we introduce a general arithmetic structure associated to any abelian group (see Definition~\ref{def:F1}). But first we define a general class of linear recurrence sequences (see Definition~\ref{def:F-1}). We recall that for a linear recurrence equation $\{a_n\}_{n\ge 1}$ given by the recurrence relation:
$$a_{n+m}=\sum_{i=0}^{m-1}c_ia_{n+i},$$
the characteristic equation is $x^m-c_{m-1}x^{m-1}-\cdots - c_0=0$; for more details, see \cite[Section~2.3]{CGSZ}. 

\begin{defn}
\label{def:S-closed}
A subset $S\subseteq \C^*$ is called \emph{powers-closed} if for any $r\in S$ and any non-negative integer $n$, we have that $r^n\in S$.
\end{defn}

\begin{defn}
\label{def:F-1}
Let $S\subseteq \C^*$ be a powers-closed set. A linear recurrence sequence $\{a_n\}_{n\ge 1}$ of integers is called an $S$-arithmetic sequence if the characteristic equation for the linear recurrence sequence $\{a_n\}_{n\ge 1}$ has distinct roots, all contained in $S$.
\end{defn}

So, with the notation as in Definition~\ref{def:F-1}, for an $S$-arithmetic sequence $\{a_n\}_{n\ge 1}$, there exist distinct numbers $r_1,\dots, r_m\in S$ and there exist complex numbers $d_1,\dots, d_m$ such that 
\begin{equation}
\label{eq:gen-form}
a_n=\sum_{i=1}^m d_ir_i^n\text{ for each }n\ge 1.
\end{equation}
Next, we generalize the notion of $S$-arithmetic sequences inside an arbitrary abelian group.

\begin{defn}
\label{def:F1}
Let $(\Gamma_0,+)$ be an abelian group, and let $S\subseteq \C^*$ be a powers-closed set. Given an integer $r\ge 1$, elements $P_1,\dots, P_r\in \Gamma_0$ along with finitely many $S$-arithmetic sequences $$\left\{a^{(1)}_n\right\}_{n\ge 1}\text{, }\left\{a^{(2)}_n\right\}_{n\ge 1},\cdots\text{, }\left\{a^{(r)}_n\right\}_{n\ge 1},$$ 
we define an $S$-arithmetic groupless set $\mathcal{U}$ as a set of the following form: 
\begin{equation}
\label{eq:Laurent-rem}
\mathcal{U}:=\left\{\sum_{i=1}^r a^{(i)}_{n_i}\cdot P_i\colon n_1,\cdots, n_r\ge 1\right\}.
\end{equation}
Given some finitely generated subgroup $\Gamma\subseteq \Gamma_0$, we say that $\mathcal{U}$ is an $S$-arithmetic groupless set in $\Gamma$ (based in $\Gamma_0$) if $\mathcal{U}\subseteq \Gamma$. 
Furthermore, an $S$-arithmetic set in $\Gamma$ (based in $\Gamma_0$) is defined as a set of the form $\mathcal{U}+B$, where $\mathcal{U}$ is an $S$-arithmetic groupless set in $\Gamma$, while $B$ is a subgroup of $\Gamma$.
\end{defn}

The notion of $S$-arithmetic sets is inspired by the definition of $F$-sets, as shown by the following notation. 

\begin{notation}
\label{def:still F-set}
With $G$, $K$, $\Fq$ and $F$ as in Notation~\ref{not:semi}, then  inside the endomorphism ring ${\rm End}(G)$, we have that $F$ is integral over $\Z$, i.e., there exists $m\ge 1$ along with integers $c_0,\dots, c_{m-1}$ (where $c_0\ne 0$) such that
\begin{equation}
\label{eq:F-rec}
F^m=\sum_{i=0}^{m-1}c_i F^i\text{ in }{\rm End}(G).
\end{equation}
Furthermore, the equation 
\begin{equation}
\label{eq:the_equation}
x^m-c_{m-1}x^{m-1}-\cdots c_1x-c_0=0
\end{equation} 
has distinct complex roots (for more details, see \cite[Section~2.1]{CGSZ}). We let  $S_F$ be the subset of $\C^*$ consisting of all complex numbers of the form $r^m$ for integers $m\ge 0$, where $r$ varies among the roots of the equation \eqref{eq:the_equation}.
\end{notation}

\begin{rem}
\label{rem:still F-set} 
Let $S_F$ be as in Notation~\ref{def:still F-set}. Then using equation~\eqref{eq:F-rec} (see also \cite[Section~3]{CGSZ}), we obtain that for any finitely generated subgroup $\Gamma\subset G(K)$, a groupless $F$-set in $\Gamma$ (based in $G(K)$) is also an $S_F$-arithmetic groupless set in $\Gamma$ (based in $G(K)$). Indeed, there exist $m$ linear recurrence sequences in $\mathbb{Z}$: $\{a^{(1)}_n\}_{n\in\mathbb{N}},\cdots, \{a^{(m)}_n\}_{n\in\mathbb{N}}$, each one of them satisfying the recurrence relation
$$a^{(i)}_{n+m}=\sum_{k=0}^{m-1} c_k\cdot a^{(i)}_{n+k}\text{ for }n\ge 1,$$
such that for any point $P\in G(K)$, we have
$$F^n(P)=\sum_{i=0}^{m-1}a^{(i+1)}_n\cdot F^i(P).$$
Therefore, an $F$-set in $\Gamma$ is an $S_F$-arithmetic set in $\Gamma$. On the other hand, there are many more $S_F$-arithmetic (groupless) sets in $\Gamma$, which are not (groupless) $F$-sets in $\Gamma$. Furthermore,  
\cite[Example~2.3]{previous} shows that arbitrary $S_F$-arithmetic sets may appear in the intersection of a subvariety $X\subset G$ with some finitely generated subgroup $\Gamma\subset G(K)$; in Theorem~\ref{thm:main}, we prove that \emph{always} $X\cap \Gamma$ must be an $S_F$-arithmetic set in $\Gamma$.
\end{rem}
We prove the following main result.
\begin{thm}
\label{thm:main}
Let $G$, $K$, $\Fq$ and $F$ be as in Notation~\ref{not:semi}, and let $S_F$ be as in Notation~\ref{def:still F-set}. Let $X\subseteq G$ be a subvariety defined over $K$ and let $\Gamma\subset G(K)$ be a finitely generated subgroup. Then the intersection $X(K)\cap\Gamma$ is a  union of finitely many $S_F$-arithmetic sets in $\Gamma$ (based in $G(K)$). 
\end{thm}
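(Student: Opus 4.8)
The plan is to reduce the general statement to the Frobenius-invariant case handled by Moosa--Scanlon (Theorem~\ref{thm:R-T}), using the fact that $\Gamma$, while not necessarily $\Z[F^\ell]$-invariant, sits inside a larger group that \emph{is} such a module. Concretely, let $\Gamma'$ be the $\Z[F]$-submodule of $G(K)$ generated by a finite set of generators of $\Gamma$. Since $F$ satisfies the monic integral relation~\eqref{eq:F-rec} in $\End(G)$, the module $\Gamma'$ is a finitely generated abelian group and $F(\Gamma')\subseteq\Gamma'$, so Theorem~\ref{thm:R-T} applies: $X(K)\cap\Gamma'$ is a finite union of $F$-sets in $\Gamma'$, and by Remark~\ref{rem:still F-set} each of these is an $S_F$-arithmetic set in $\Gamma'$ (based in $G(K)$). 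The desired intersection $X(K)\cap\Gamma$ is then $(X(K)\cap\Gamma')\cap\Gamma$, so the problem becomes: show that the intersection of an $S_F$-arithmetic set in $\Gamma'$ with the subgroup $\Gamma$ is again a finite union of $S_F$-arithmetic sets, now \emph{in $\Gamma$}.

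The core of the argument is therefore a combinatorial/arithmetic statement about the sets~\eqref{eq:Laurent-rem}. Fix an $S_F$-arithmetic groupless set $\mathcal{U}=\left\{\sum_{i=1}^r a^{(i)}_{n_i}\cdot P_i\colon n_1,\dots,n_r\ge 1\right\}$ with $P_i\in\Gamma'$ and a subgroup $B\subseteq\Gamma'$; I must describe $(\mathcal{U}+B)\cap\Gamma$. First I would reduce to the groupless case by replacing $G$ with $G/\overline{B}$ (the Zariski closure of $B$ is a semiabelian subvariety defined over $\Fq$ after a suitable base change, preserving the finite-field hypothesis), pushing $X$, $\Gamma$, $\Gamma'$ forward; the preimage of an $S_F$-arithmetic set under the quotient $\Gamma'\to\Gamma'/(B\cap\Gamma')$ is again an $S_F$-arithmetic set, so it suffices to handle $\mathcal{U}\cap\Gamma$. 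Now $\Gamma$ has finite index in its saturation inside $\Gamma'$, or rather: the condition $\sum a^{(i)}_{n_i}P_i\in\Gamma$ cuts out the tuples $(n_1,\dots,n_r)$ for which a fixed $\Z$-linear combination of the coordinate sequences lies in a fixed subgroup. Writing $\Gamma'/\Gamma$ (modulo torsion, with torsion handled separately since $G(K)_{\mathrm{tors}}\subseteq G(\Fq^{\mathrm{alg}})$ is controlled by $F$) as a quotient by finitely many congruence and lattice conditions, the set of admissible $(n_1,\dots,n_r)$ is a finite Boolean combination of sets defined by conditions of the form ``$\sum_i e_i\, a^{(i)}_{n_i}\equiv c \pmod{N}$'' together with genuine equalities among integer linear combinations of the $a^{(i)}_{n_i}$.

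The decisive input is then the structure theory of solution sets of such equations for $S$-arithmetic (hence Skolem--Mahler--Lech-type) sequences: by the Skolem--Mahler--Lech theorem and its refinements in positive-characteristic-flavored settings (e.g.\ \cite{CGSZ}), the set of $(n_1,\dots,n_r)$ satisfying a system of linear equations and congruences in the values $a^{(i)}_{n_i}$ is a finite union of ``arithmetic-progression-times-product-of-$S$-arithmetic-graphs'' pieces — that is, on each piece the free parameters range over arithmetic progressions and the constrained ones are pinned to values that are themselves $S_F$-arithmetic functions of the free ones. Substituting back into $\sum a^{(i)}_{n_i}P_i$, each such piece yields exactly an $S_F$-arithmetic groupless set in $\Gamma$ (using that products and sums of $S_F$-arithmetic sequences along an arithmetic progression remain $S_F$-arithmetic, because $S_F$ is powers-closed and closed under the relevant products of roots — this is where the precise definition of $S_F$ via \eqref{eq:the_equation} is used). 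Reassembling over the finitely many pieces, and then pulling back through the quotient by $\overline{B}$ to reinstate the group part $B\cap\Gamma$, gives the finite union of $S_F$-arithmetic sets in $\Gamma$.

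The main obstacle I anticipate is precisely this last structural step: controlling the solution set in $(n_1,\dots,n_r)$ of a \emph{system} of linear relations among values of several independent linear recurrence sequences, and verifying that after solving for the dependent variables one still lands in the class of $S_F$-arithmetic sequences rather than some larger class (the roots appearing could a priori be products or ratios of the $r_i$, not powers of a single root). Keeping the root set inside $S_F$ — and not merely inside some multiplicative closure of it — will require exploiting that all the $r_i$ are roots of the \emph{one} characteristic polynomial~\eqref{eq:the_equation} attached to $F$, together with the Frobenius relation~\eqref{eq:F-rec} that ties the sequences $a^{(i)}_n$ to the single operator $F$; the bookkeeping here, and the separate but routine treatment of torsion and of the descent to the groupless case, are where the real work lies.
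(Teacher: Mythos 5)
Your opening reduction coincides exactly with the paper's proof of Theorem~\ref{thm:main}: replace $\Gamma$ by the finitely generated $\Z[F]$-module $\Gamma'$ it generates (the paper's $\tilde{\Gamma}$), apply Theorem~\ref{thm:R-T}, and convert $F$-sets to $S_F$-arithmetic sets via Remark~\ref{rem:still F-set}. But the statement you are then left with --- that the intersection of an $S_F$-arithmetic set in $\tilde{\Gamma}$ with $\Gamma$ is again a finite union of $S_F$-arithmetic sets in $\Gamma$ --- is not a routine verification: it is Theorem~\ref{thm:G}, whose proof occupies all of Section~\ref{sec:proof}, and your sketch of it has a genuine gap. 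The proposed descent to the groupless case does not work as stated. If you quotient geometrically by $\overline{B}$, the kernel of $\tilde{\Gamma}\to (G/\overline{B})(K)$ is $\tilde{\Gamma}\cap\overline{B}(K)$, which in general strictly contains $B$, so the intersection computed downstairs describes $\left(\mathcal{U}+\left(\tilde{\Gamma}\cap\overline{B}(K)\right)\right)\cap\Gamma$ rather than $(\mathcal{U}+B)\cap\Gamma$. If instead you use the abstract quotient $\pi\colon\tilde{\Gamma}\to\tilde{\Gamma}/B$, the preimage in $\Gamma$ of a groupless set downstairs is not ``a lift plus $\Gamma\cap B$'': for a point $t=\sum_i a^{(i)}_{n_i}\bar{P}_i$ in the image, $\pi^{-1}(t)\cap\Gamma$ is a coset of $\Gamma\cap B$ whose representative inside $\sum_i a^{(i)}_{n_i}P_i+B$ \emph{depends on the indices} $(n_1,\dots,n_m)$. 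Controlling exactly this dependence --- solving for the $B$-coordinates as integer-valued functions of the $a^{(i)}_{n_i}$, checking integrality via preperiodicity of recurrences modulo a fixed integer, and verifying the outcome is still built from $S_F$-arithmetic sequences --- is the actual core of the paper's argument (the linear system \eqref{eq:linear_equation 3} and its solution \eqref{eq:linear_equation 13}, then \eqref{eq:still_groupless 11}--\eqref{eq:still_set 11}); labelling it routine is precisely where your proof is missing.

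The second problem is the ``decisive input'' you invoke. Skolem--Mahler--Lech concerns the zero set of a single recurrence in one index; here one needs the solution set in the \emph{independent} indices $(n_1,\dots,n_m)$ of a system $\sum_i z_{\ell,i}\,a^{(i)}_{n_i}=0$, equivalently the exponential equations \eqref{eq:linear_equation 50}. The correct tool is Laurent's theorem (Mordell--Lang for tori) \cite{Laurent}, and the structure it gives is the specific one in \eqref{eq:M-L-tori 1}--\eqref{eq:M-L-tori 3}: some indices fixed, some restricted to arithmetic progressions, and pairwise proportionality relations between indices --- not the ``dependent indices pinned to $S_F$-arithmetic functions of the free ones'' shape you assert without proof. (Your closing worry about products or ratios of the roots is, by contrast, not the real difficulty: since $S_F$ is powers-closed, Lemma~\ref{lem:still_simple} already handles subsequences along arithmetic progressions, which is all that is needed once Laurent's structure theorem is in place; the distinct-roots hypothesis is what makes Laurent applicable at all, cf.\ Remark~\ref{rem:Laurent}.) So the overall architecture matches the paper, but the two key steps --- the interaction of the group part with the recurrence data, and the multivariate vanishing theorem --- are not established by your argument.
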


%%%%%%%%%%%%%%%%%%%%%%%%%%%%%%%%%%%%%%%%%%%%%%%%%%%%%%%%%%%%%%%%%%%%%%%%%%%%%%%

\subsection{Plan for our paper}

We prove Theorem~\ref{thm:main} as a consequence of a general statement regarding $S$-arithmetic sets in an abelian group.

\begin{thm}
\label{thm:G}
Let $(\Gamma_0,+)$ be an abelian group, let $\tilde{\Gamma}$ and $\Gamma$ be  finitely generated subgroups of $\Gamma_0$, and let $S\subseteq \C^*$ be a powers-closed set. Then the intersection of an $S$-arithmetic set in $\tilde{\Gamma}$ (based in $\Gamma_0$) with $\Gamma$  is a finite union of $S$-arithmetic sets in $\Gamma$ (based in $\Gamma_0$).
\end{thm}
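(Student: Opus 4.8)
\textbf{Proof strategy for Theorem~\ref{thm:G}.}

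The plan is to reduce to the case of a single $S$-arithmetic \emph{groupless} set, and then to induct on the number $r$ of ``variable'' coordinates appearing in the defining expression~\eqref{eq:Laurent-rem}. First I would dispose of the subgroup part: if $\mathcal{U}+B$ is an $S$-arithmetic set in $\tilde\Gamma$ and we wish to intersect with $\Gamma$, note that $B\cap\Gamma$ is again a finitely generated subgroup, and a point of $\mathcal{U}+B$ lies in $\Gamma$ only if its image in $\Gamma_0/(B\cap\Gamma)$ (or better, after passing to suitable finite-index considerations) lies in the image of $\Gamma$; the combinatorics here is the same as in Moosa--Scanlon, so the real content is the groupless case. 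Thus assume $\mathcal{U}=\left\{\sum_{i=1}^r a^{(i)}_{n_i}P_i : n_i\ge 1\right\}$ and we want to understand $\mathcal{U}\cap\Gamma$.

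The key observation is a finiteness-of-rank argument. Write each $S$-arithmetic sequence in its closed form $a^{(i)}_n=\sum_{j} d^{(i)}_j (r^{(i)}_j)^n$ as in~\eqref{eq:gen-form}. Consider the subgroup $\Gamma_1\subseteq\Gamma_0$ generated by $\Gamma$ together with $P_1,\dots,P_r$; it is finitely generated, hence $\Gamma_1\cong\Z^s\oplus(\text{torsion})$ for some $s$. Projecting onto a torsion-free quotient $\Z^s$ and then onto each coordinate, the condition $\sum_i a^{(i)}_{n_i}P_i\in\Gamma$ becomes a finite conjunction of conditions of the shape $\sum_i a^{(i)}_{n_i}\cdot\ell_i = (\text{a coordinate of an element of }\Gamma)$ for integer constants $\ell_i$ coming from the coordinates of the $P_i$, \emph{plus} the torsion condition handled separately. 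But membership in $\Gamma$ is itself described by finitely many linear conditions on the $\Z^s$-coordinates. So in the end the set of $(n_1,\dots,n_r)\in\N^r$ for which $\sum_i a^{(i)}_{n_i}P_i\in\Gamma$ is the solution set, in $\N^r$, of finitely many equations of the form $\sum_{i=1}^r \lambda_i a^{(i)}_{n_i}=0$ with $\lambda_i\in\Z$ (after clearing denominators and moving the $\Gamma$-part to one side, which only enlarges the family of sequences involved — crucially still $S$-arithmetic since $S$ is powers-closed and closed under the relevant operations, or rather one augments the index set). The main technical point is therefore: \emph{describe the solution set in $\N^r$ of a single equation $\sum_{i=1}^r \lambda_i a^{(i)}_{n_i}=c$ where each $a^{(i)}$ is $S$-arithmetic}, and show it decomposes into finitely many ``product-like'' pieces each of which pulls back to a finite union of $S$-arithmetic groupless sets.

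For that I would invoke the structure theory of solutions to such exponential-polynomial equations — essentially a Skolem--Mahler--Lech / $S$-unit equation input: the function $(n_1,\dots,n_r)\mapsto \sum_i \lambda_i a^{(i)}_{n_i}$ is an exponential polynomial in $r$ variables with bases drawn from the finite set of roots $r^{(i)}_j\in S$, and the vanishing locus of an exponential polynomial on $\N^r$ is a finite union of cosets of subgroups of $\Z^r$ intersected with translates of lower-dimensional such loci (this is where one iterates / inducts on $r$). On each such affine piece $\{(n_1,\dots,n_r): \text{affine-linear relations among the }n_i\}\cap\N^r$, substituting the linear relations back into $\sum_i a^{(i)}_{n_i}P_i$ yields again an expression of the form $\sum a^{(i')}_{m}Q_{i'}$ with fewer free parameters and with sequences that are still $S$-arithmetic — here one uses that replacing $n$ by $an+b$ sends $r^n$ to $r^b(r^a)^n$ and $r^a\in S$ because $S$ is powers-closed, so the new characteristic roots still lie in $S$; one must also argue the new characteristic equation still has distinct roots, or else split further / absorb coincidences, which is a bookkeeping nuisance but not a genuine obstacle. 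Iterating, every piece becomes an $S$-arithmetic groupless set (possibly a single point, the $r=0$ case), and there are finitely many pieces, giving the theorem.

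\textbf{Main obstacle.} The crux is the two intertwined combinatorial facts: (a) that membership in the finitely generated group $\Gamma$ translates cleanly into finitely many exponential-polynomial equations in $(n_1,\dots,n_r)$, and (b) that the solution set of such a system in $\N^r$ admits a finite decomposition into cosets-of-subgroups-of-$\Z^r$ pieces along which the original sum restricts to an $S$-arithmetic groupless set with strictly fewer parameters, so that the induction on $r$ closes. Point (b) — controlling the solution sets of multivariate linear-recurrence / $S$-unit equations and checking that the reparametrized sequences stay within the $S$-arithmetic class (distinct roots, bases in $S$) — is where essentially all the work lies; the torsion part of $\Gamma_0$ and the passage $\mathcal{U}+B \leadsto \mathcal{U}$ are comparatively routine.
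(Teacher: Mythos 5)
The engine of your plan --- converting membership of $\sum_i a^{(i)}_{n_i}P_i$ into exponential-sum equations in the $n_i$, invoking a Laurent/$S$-unit--type structure theorem for their solution sets, and then reparametrizing while using powers-closedness of $S$ to stay in the $S$-arithmetic class --- is exactly the engine of the paper's proof (Section~\ref{subsec:L}, where Laurent's theorem is applied to the purely exponential system \eqref{eq:linear_equation 50}, together with Lemma~\ref{lem:still_simple}). The genuine gap is your opening move: ``dispose of the subgroup part; the real content is the groupless case.'' Intersecting $\mathcal{U}+B$ with $\Gamma$ does not reduce to a groupless problem in a quotient: knowing which cosets $u+B$ meet $\Gamma$ does not exhibit the actual points of $(\mathcal{U}+B)\cap\Gamma$ as an $S$-arithmetic set \emph{in} $\Gamma$; one must solve for the $B$-component and show it too is given by $S$-arithmetic data, and identify the residual subgroup. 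In the paper this is where most of the work lies: integer unknowns $y_1,\dots,y_s$ for the $H$-coordinates are carried through the whole argument, the linear system \eqref{eq:linear_equation 3} couples them to the recurrences, its \emph{rational} solvability is what produces the exponential equations fed to Laurent, its \emph{integral} solvability is then forced via preperiodicity of integer recurrences modulo the denominators (Section~\ref{subsec:conclusion}), and the subgroup in the final $S$-arithmetic set is the image of the free variables of that system (see \eqref{eq:still_set 11}) --- not simply $B\cap\Gamma$. None of this is recovered by your quotient sketch, which you yourself leave at the level of ``the combinatorics is the same as in Moosa--Scanlon.''

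A second, related omission: your claim that membership in $\Gamma$ is ``described by finitely many linear conditions on the $\Z^s$-coordinates'' is incomplete. A subgroup of $\Z^s$ is cut out by linear equations \emph{and} congruence conditions (the paper's ${\bf L}$-subsets and ${\bf C}$-subsets, following \cite{TAMS}), and the congruence part is not a case of Laurent's theorem; it is handled by the elementary but essential fact that an integer linear recurrence is preperiodic modulo any integer, which the paper uses three separate times (the torsion reduction in Section~\ref{subsec:torsion-free}, the ${\bf C}$-subsets in Section~\ref{subsec:C}, and the integrality step just mentioned) and which never appears in your outline. So while your point (b) (the Laurent-type step and the powers-closedness bookkeeping) matches the paper, the coupled linear system for the group part and the congruence/integrality analysis are missing, and as written the proposal does not yet amount to a proof.
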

In Remark~\ref{rem:Laurent}, we show an example that if one were to consider an extension of $S$-arithmetic sets (see Definition~\ref{def:F1}) in an abelian group which allows for linear recurrence sequences in equation~\eqref{eq:Laurent-rem} with characteristic roots of higher multiplicity, then the corresponding problem from Theorem~\ref{thm:G} leads to some deep unknown questions from classical number theory (such as determining the positive integers whose squares have a given number of nonzero digits when written in base-$2$).

We prove Theorem~\ref{thm:G} in Section~\ref{sec:proof}; then  Theorem~\ref{thm:main} follows as an easy corollary of Theorem~\ref{thm:G} (see Section~\ref{sec:proofs}).

%%%%%%%%%%%%%%%%%%%%%%%%%%%%%%%%%%%%%%%%%%%%%%%%%%%%%%%%%%%%%%%%%%%%%%%%%%%%%%%%
%%%%%%%%%%%%%%%%%%%%%%%%%%%%%%%%%%%%%%%%%%%%%%%%%%%%%%%%%%%%%%%%%%%%%%%%%%%%%%%%

\section{Proof of Theorem~\ref{thm:G}}
\label{sec:proof}

We prove Theorem~\ref{thm:G} over the next several subsections of Section~\ref{sec:proof}; so, throughout the entire Section~\ref{sec:proof}, we work under the hypotheses and notation from Theorem~\ref{thm:G}. 

We start with a very simple lemma, which is used repeatedly in our proofs.
\begin{lemma}
\label{lem:still_simple}
Let $S\subseteq \C$ be a powers-closed set and let $\{a_n\}_{n\in\mathbb{N}}$ be an $S$-arithmetic sequence. 
\begin{itemize}
\item[(i)] Then for each $u\in\mathbb{N}\cup\{0\}$ and $v\in\mathbb{N}$, the sequence $\{a_{un+v}\}_{n\in\mathbb{N}}$ is also an $S$-arithmetic sequence.
\item[(ii)] Then for each $u,v\in\Z$, the sequence $\{u\cdot a_n+v\}_{n\in\mathbb{N}}$ is also an $S$-arithmetic sequence.
\end{itemize}
\end{lemma}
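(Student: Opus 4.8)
\textbf{Proof plan for Lemma~\ref{lem:still_simple}.}

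The plan is to use the explicit form \eqref{eq:gen-form} of an $S$-arithmetic sequence throughout. By Definition~\ref{def:F-1}, there exist distinct $r_1,\dots,r_m\in S$ and complex numbers $d_1,\dots,d_m$ with $a_n=\sum_{i=1}^m d_i r_i^n$ for all $n\ge 1$. I will verify each of the two claims by exhibiting the characteristic roots of the new sequence, checking that they are \emph{distinct} and that they lie in $S$ (using that $S$ is powers-closed), while observing that the new sequence is itself integer-valued and satisfies a linear recurrence whose characteristic polynomial has precisely those roots.

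For part (i): fix $u\in\N\cup\{0\}$ and $v\in\N$. Then $a_{un+v}=\sum_{i=1}^m \bigl(d_i r_i^v\bigr)\cdot (r_i^u)^n$. If $u\ge 1$, the candidate roots are $r_1^u,\dots,r_m^u$; each lies in $S$ since $S$ is powers-closed, but they need not be distinct, so I would collect together the indices $i$ giving the same value $r_i^u$, summing the corresponding coefficients $d_i r_i^v$. After this grouping, the surviving roots are distinct and in $S$; dropping any root whose aggregated coefficient vanishes only shrinks the root set, preserving both properties. (If $u=0$, the sequence is constant with value $a_v\in\Z$, which is $S$-arithmetic via the single root $1\in S$ — here one uses $1=r^0\in S$ for any $r\in S$, assuming $S$ nonempty, which holds since $m\ge 1$; if $m=0$ the sequence is identically $0$, handled the same way or trivially.) In all cases the resulting sequence is a linear recurrence of integers — it is a subsequence of an integer sequence along an arithmetic progression, hence integer-valued, and it satisfies the linear recurrence with characteristic polynomial $\prod(x-r_i^u)$ over the distinct surviving roots — so it is an $S$-arithmetic sequence.

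For part (ii): fix $u,v\in\Z$. Then $u\cdot a_n+v=\sum_{i=1}^m (u d_i) r_i^n + v\cdot 1^n$. The candidate root set is $\{r_1,\dots,r_m\}\cup\{1\}$. Again $1\in S$ (as $1=r_1^0$, using $m\ge 1$; the degenerate case $m=0$, $u a_n + v \equiv v$, is immediate), and the $r_i$ are already in $S$. If $1$ coincides with some $r_j$, merge $v$ into the coefficient $u d_j$; otherwise adjoin $1$ with coefficient $v$. After discarding any root with zero coefficient, the roots are distinct and lie in $S$, and the sequence $\{u a_n+v\}$ is an integer sequence satisfying the linear recurrence with characteristic polynomial $\prod_{\text{surviving }i}(x-r_i)$ (possibly times $(x-1)$). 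Hence it is $S$-arithmetic. The only mildly delicate point throughout is bookkeeping the coalescing of roots and the possible dropping of roots with vanishing coefficients — but in each case this operation only passes to a subset of a set already contained in $S$ and consisting of distinct elements, so no genuine obstacle arises; the lemma is, as stated, simple.
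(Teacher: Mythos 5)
Your proposal is correct and follows essentially the same route as the paper's proof: write $a_n=\sum_i d_i r_i^n$ via \eqref{eq:gen-form}, observe that the new characteristic roots are $r_i^u$ (resp.\ $\{r_1,\dots,r_m\}\cup\{1\}$) and lie in $S$ by powers-closedness. Your extra bookkeeping — merging indices when $r_i^u=r_j^u$ or when $1$ coincides with some $r_j$, and discarding roots with vanishing aggregated coefficients — is a point the paper passes over silently, and it is handled correctly.
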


\begin{proof}
The result follows immediately using the general form~\eqref{eq:gen-form} of an element in a linear recurrence sequence whose characteristic roots are non-repeated elements of $S$. Indeed, let $r_1,\dots, r_m$ be the characteristic roots of $\{a_n\}_{n\in\mathbb{N}}$.

For part~(i), note that the characteristic roots of the linear recurrence sequence $\{a_{un+v}\}$ are $r_i^u$; since $S$ is powers-closed, then each $r_i^u$ is also in $S$. 

For part~(ii), note that adding a nonzero constant to a linear recurrence sequence leads to another linear recurrence sequence whose characteristic roots are distinct; they form the set $\{r_1,\dots, r_m\}\cup\{1\}$ (also, note that $1\in S$ because $S$ is a powers-closed set).
\end{proof}

We proceed to proving Theorem~\ref{thm:G}; first, we will sketch the plan for our proof in Section~\ref{subsec:plan}. 

%%%%%%%%%%%%%%%%%%%%%%%%%%%%%%%%%%%%%%%%%%%%%%%%%%%%%%%%%%%%%%%%%%%%%%%%%%%%%%
\subsection{Plan for our proof of Theorem~\ref{thm:G}}
\label{subsec:plan}

The first step in our proof is to make a couple useful reductions:
\begin{itemize}
\item in Section~\ref{subsec:fin.gen}, we show that it suffices to prove Theorem~\ref{thm:G} when $\Gamma_0=\tilde{\Gamma}$ is a finitely generated group. 
\item in Section~\ref{subsec:torsion-free}, we show that it suffices to assume $\Gamma_0$ is torsion-free.
\end{itemize}
So, having reduced the proof of Theorem~\ref{thm:G} to the case the ambient group $\Gamma_0$ is a torsion-free, finitely generated group, the next step (established in Section~\ref{subsec:linear_algebra}) is to reformulate our problem as a linear algebra question. Using the analysis from Sections~\ref{subsec:C}~and~\ref{subsec:L}, we conclude our proof of Theorem~\ref{thm:G} in Section~\ref{subsec:conclusion}.

%%%%%%%%%%%%%%%%%%%%%%%%%%%%%%%%%%%%%%%%%%%%%%%%%%%%%%%%%%%%%%%%%%%%%%%%%%%%%%%

\subsection{It suffices to prove Theorem~\ref{thm:G} assuming  $\Gamma_0=\tilde{\Gamma}$ is finitely generated}
\label{subsec:fin.gen}

Indeed, we have an $S$-arithmetic groupless set $\mathcal{U}$ (contained in $\tilde{\Gamma}$) consisting of all elements of $\Gamma_0$ of the form
\begin{equation}
\label{eq:P}
\sum_{i=1}^m a^{(i)}_{n_i}\cdot P_i\text{, as we vary }n_i\in\N,
\end{equation}
for some $m\in\N$, for some given elements $P_i\in \Gamma_0$, and for some given $S$-arithmetic sequences  $\{a^{(1)}_n\}_{n\ge 1},\cdots, \{a^{(m)}_n\}_{n\ge 1}$. Then given a subgroup $H\subseteq \tilde{\Gamma}$, our goal is to show that the $S$-arithmetic set  
\begin{equation}
\label{eq:F}
\mathcal{F}:=\mathcal{U}+H 
\end{equation}
intersects $\Gamma$ in a finite union of $S$-arithmetic sets (in $\Gamma$). At the expense of replacing $\tilde{\Gamma}$ with a larger subgroup of $\Gamma_0$ (but still finitely generated), we may assume both that $\Gamma\subseteq \tilde{\Gamma}$ and that each $P_i\in \tilde{\Gamma}$ for $i=1,\dots, m$. Finally, without loss of generality, we may assume $\Gamma_0=\tilde{\Gamma}$ is finitely generated.

%%%%%%%%%%%%%%%%%%%%%%%%%%%%%%%%%%%%%%%%%%%%%%%%%%%%%%%%%%%%%%%%%%%%%%%%%%%%%%%

\subsection{It suffices to prove Theorem~\ref{thm:G} assuming $\Gamma_0$ is torsion-free}
\label{subsec:torsion-free}

We already reduced proving Theorem~\ref{thm:G} to the special case $\Gamma_0=\tilde{\Gamma}$ is finitely generated. Therefore, since $(\Gamma_0,+)$ is an abelian group, then $\Gamma_0$ is the direct product of a (finitely generated) free subgroup $\Gamma_{0,{\rm free}}$ with a finite torsion subgroup $\Gamma_{0, {\rm tor}}$. Hence both $H$ and $\Gamma$ (being subgroups of $\Gamma_0$) are finite unions of cosets of subgroups of $\Gamma_{0, {\rm free}}$, i.e., 
\begin{equation}
\label{eq:H-tor}
H=\bigcup_{i=1}^k \left(h_i+H_{\rm free}\right)\text{ and }\Gamma=\bigcup_{j=1}^\ell \left(\gamma_{j}+\Gamma_{\rm free}\right),
\end{equation}
for some $k,\ell\in\mathbb{N}$, some elements $h_{i}$ and $\gamma_{j}$ in $\Gamma_{0}$ and some subgroups $H_{\rm free}$ and $\Gamma_{\rm free}$ of $\Gamma_{0, {\rm free}}$. So, it suffices to prove that for any given $i_0\in\{1,\dots, k\}$ and $j_0\in\{1,\dots, \ell\}$, the intersection
\begin{equation}
\label{eq:i-j}
\left(\mathcal{U}+\left(h_{i_0}+H_{\rm free}\right)\right)\cap \left(\gamma_{j_0}+\Gamma_{\rm free}\right)
\end{equation}
is a finite union of $S$-arithmetic sets in $\Gamma$. The following observation will be used repeatedly in our proof.

\begin{rem}
\label{rem:repeatedly}
Since a constant sequence (in $\Z$) is itself a linear recurrence sequence whose characteristic root is $1$ (this is covered by the case $u=0$ in  Lemma~\ref{lem:still_simple}, either parts~(i)~or~(ii)), we note that for any $S$-arithmetic groupless set $\mathcal{G}$ and for any $\delta\in\Gamma_0$, we have that also $\delta+\mathcal{G}$ is an $S$-arithmetic groupless set (note also that $S$ contains $1$). Furthermore, if $\mathcal{G}$ is an $S$-arithmetic set, then also $\delta+\mathcal{G}$ is an $S$-arithmetic set.
\end{rem}

We re-write the intersection from \eqref{eq:i-j}  as
\begin{equation}
\label{eq:rewrite i-j}
\gamma_{j_0}+\left(\left((-\gamma_{j_0}+h_{i_0})+\mathcal{U}+H_{\rm free}\right)\cap \Gamma_{\rm free}\right). 
\end{equation}
In light of Remark~\ref{rem:repeatedly}, it suffices to prove that 
\begin{equation}
\label{eq:rewrite i-j 2}
\left((-\gamma_{j_0}+h_{i_0})+\mathcal{U}+H_{\rm free}\right)\cap \Gamma_{\rm free}\text{ is an $S$-arithmetic set in $\Gamma$.} 
\end{equation}

Now, for $i\in\{1,\dots, m\}$, we write each $P_i$ (see \eqref{eq:P}) as $P_{i, {\rm tor}}+P_{i, {\rm free}}$ with $P_{i, {\rm tor}}\in \Gamma_{0, {\rm tor}}$ and $P_{i, {\rm free}}\in \Gamma_{0,\rm free}$. Also, we write $-\gamma_{j_0}+h_{i_0}=\eta_{\rm tor}+\eta_{\rm free}$ for some $\eta_{\rm tor}\in \Gamma_{0,{\rm tor}}$ and $\eta_{\rm free}\in \Gamma_{0, {\rm free}}$.

We let $M:=|\Gamma_{0, {\rm tor}}|$.  Because each linear recurrence sequence of integers is preperiodic modulo any given integer (and  thus, in particular, preperiodic modulo $M$), we obtain that the set of tuples $(n_1,\dots, n_m)$ of positive integers  satisfying
\begin{equation}
\label{eq:same-tor}
\eta_{\rm tor}+\sum_{i=1}^m a^{(i)}_{n_i}\cdot P_{i, \rm tor}=0\text{ in }\Gamma_0
\end{equation}
is a finite union of sets of the form
$$\left\{(k_1n_1+\ell_1, k_2n_2+\ell_2,\dots, k_mn_m+\ell_m)\colon \text{ for arbitrary } n_1,\dots, n_m\ge 1\right\}$$
for some given $2m$-tuples of integers:
\begin{equation}
\label{eq:tuples}
(k_1,\ell_1,k_2,\ell_2,\cdots, k_m,\ell_m)\text{ with }k_i\ge 0\text{ and }\ell_i\ge 1.
\end{equation}  
Therefore, at the expense of replacing each linear recurrence sequence $\left\{a^{(i)}_n\right\}_{n\ge 1}$ with $\left\{a^{(i)}_{k_in+\ell_i}\right\}_{n\ge 1}$ (which is still an $S$-arithmetic sequence, as shown in Lemma~\ref{lem:still_simple}~(i)), we reduce our problem to proving that the intersection with $\Gamma_{\rm free}$ of  the $S$-arithmetic set 
\begin{equation}
\label{eq:F-free}
\mathcal{F}_1:=\mathcal{S}_1+H_{\rm free},
\end{equation}
where $\mathcal{S}_1$ is the $S$-arithmetic groupless set given by
\begin{equation}
\label{eq:S-free}
\mathcal{S}_1:=\left\{\eta_{\rm free}+\sum_{i=1}^m a^{(i)}_{n_i}\cdot P_{i,{\rm free}}\colon n_1,\dots,n_m\ge 1\right\}
\end{equation}
is a finite union of $S$-arithmetic sets in $\Gamma$.

Therefore, from now on, we work under the assumption that $\Gamma_0$ is torsion-free (see also equations~\eqref{eq:F-free}~and~\eqref{eq:S-free}).

%%%%%%%%%%%%%%%%%%%%%%%%%%%%%%%%%%%%%%%%%%%%%%%%%%%%%%%%%%%%%%%%%%%%%%%%%%%%%%%

\subsection{Reduction to a linear algebra question}
\label{subsec:linear_algebra}

So, we work under the assumption that $\Gamma_0$ is a finitely generated, free abelian group; hence it is isomorphic to $\mathbb{Z}^r$ and we let $Q_1,\dots, Q_r\in\Gamma_0$ be some fixed generators for $\Gamma_0$. 

We have a subgroup $\Gamma\subseteq \Gamma_0$ and also, we have an $S$-arithmetic set $\mathcal{F}\subseteq \Gamma_0$. Furthermore, $\mathcal{F}=\mathcal{U}+H$ for an $S$-arithmetic groupless set $\mathcal{U}$ and for a subgroup $H\subseteq \Gamma_0$. Since $H$ is a subgroup of $\Gamma_0$, then it is also a finitely generated, free abelian group; thus, we let $\{R_1,\dots, R_s\}$ be a given $\Z$-basis for $H$ (where $s\le r$). For each $i\in\{1,\dots, s\}$, we write $R_i$ in terms of the basis $\{Q_1,\dots, Q_r\}$ of $\Gamma_0$ as
\begin{equation}
\label{eq:R}
R_i:=\sum_{j=1}^r b_{i,j}Q_j\text{ for some }b_{i,j}\in\Z.
\end{equation}
The $S$-arithmetic groupless set $\mathcal{U}$ consists of all elements of the form
\begin{equation}
\label{eq:P 2}
\sum_{i=1}^m a^{(i)}_{n_i}\cdot P_i\text{, as we vary }n_i\in\N,
\end{equation}
for some given elements $P_1,\dots, P_m\in\Gamma_0$, 
where $\{a^{(i)}_n\}_{n\in\mathbb{N}}$ are $S$-arithmetic sequences (for $1\le i\le m$). Then for each $i\in\{1,\dots, m\}$, we write $P_i$ as
\begin{equation}
\label{eq:P2}
P_i:=\sum_{j=1}^r c_{i,j}Q_j\text{ for some }c_{i,j}\in\Z.
\end{equation}
Therefore, a point in $\mathcal{F}=\mathcal{S}+H$ is of the form
\begin{equation}
\label{eq:generic point 0}
\sum_{k=1}^s y_k\cdot R_k + \sum_{i=1}^m a^{(i)}_{n_i}\cdot P_i,
\end{equation}
for some arbitrary integers $y_k$ and arbitrary positive integers $n_i$. Hence, using equations~\eqref{eq:R}~and~\eqref{eq:P2}, we write any point in $\mathcal{F}$ as:
\begin{equation}
\label{eq:generic point}
\sum_{j=1}^r \left(\sum_{k=1}^s b_{k,j}y_k + \sum_{i=1}^m c_{i,j}a^{(i)}_{n_i}\right)\cdot Q_j, 
\end{equation}
for some arbitrary $y_k\in\Z$ and some arbitrary $n_i\in\mathbb{N}$.

Now, following \cite[Definition~3.5]{TAMS}, we define ${\bf C}$-subsets, ${\bf L}$-subsets and ${\bf CL}$-sets of $\Z^r$; their notation comes from \emph{congruence} equation (for ${\bf C}$-subset), respctively \emph{linear} equation (for ${\bf L}$-subset).
\begin{defn}
\label{def:C-L}
A ${\bf C}$-subset of $\Z^m$ is a set ${\bf C}(d_1, \dots, d_r,D)$, where $d_1,\dots, d_r,D\in\Z$ (with $D\ne 0$), containing all solutions $(x_1,\dots, x_r)\in\Z^r$ of the congruence equation $\sum_{i=1}^r d_ix_i\equiv 0\pmod{D}$. 

An ${\bf L}$-subset of $\Z^r$ is a set ${\bf L}(d_1,\dots, d_r)$, where $d_1,\dots, d_r\in\Z$, containing all
solutions $(x_1, \dots, x_r)\in\Z^r$ of the linear equation $\sum_{i=1}^r d_ix_i=0$.

A ${\bf CL}$-subset of $\Z^r$ is either a ${\bf C}$-subset or an ${\bf L}$-subset of $\Z^r$.
\end{defn}

\begin{rem}
\label{eq:compare}
We note that our ${\bf C}$-subsets, ${\bf L}$-subsets and ${\bf CL}$-subsets are slightly simpler than the ones defined in \cite{TAMS} because we can absorb any coset of a subgroup in our definition of $S$-arithmetic groupless subsets of $\Gamma_0$ since the constant sequence is a linear recurrence sequence with unique characteristic root equal to $1$ (see also Lemma~\ref{lem:still_simple}~and~Remark~\ref{rem:repeatedly}). 
\end{rem}

As proven in \cite[Subclaim~3.6]{TAMS}, there exist finitely many ${\bf C}$-subsets $C_i$ of $\Z^r$ (with the index $i$ varying in some finite set $I$)   and there exist finitely many ${\bf L}$-subsets $L_j$ of $\Z^r$ (with the index $j$ varying in some finite set $J$) such that for any $(x_1,\dots, x_r)\in\Z^r$, we have 
\begin{equation}
\label{eq:generic C-L}
\sum_{i=1}^r x_iQ_i\in\Gamma
\end{equation}
if and only if 
\begin{equation}
\label{eq:generic C-L 2}
(x_1,\dots, x_r)\in \left(\bigcap_{i\in I}C_i\right)\bigcap \left(\bigcap_{j\in J}L_j\right). 
\end{equation}
Combining equations~\eqref{eq:generic point},~\eqref{eq:generic C-L}~and~\eqref{eq:generic C-L 2},  our problem reduces to analyzing for which integers $y_1,\dots, y_s$ and for which positive inttegers $n_1,\dots, n_m$, we have that 
\begin{equation}
\label{eq:general_equ}
\left(\sum_{k=1}^s b_{k,j}y_k + \sum_{i=1}^m c_{i,j}a^{(i)}_{n_i}\right)_{1\le j\le r}\in  \left(\bigcap_{i\in I}C_i\right)\bigcap \left(\bigcap_{j\in J}L_j\right).
\end{equation}
We analyze the conditions imposed on the tuples $(y_1,\dots,y_s,n_1,\dots, n_m)$ so that the left-hand side from equation~\eqref{eq:general_equ} belongs to some $C_i$, respectively some $L_j$. We split our analysis for these two cases over the next two Sections~\ref{subsec:C}~and~\ref{subsec:L}; there are significant differences between these two cases, one of them being that the case of ${\bf C}$-subsets is easier than the case of ${\bf L}$-subsets and it can be treated one congruence equation at a time.

%%%%%%%%%%%%%%%%%%%%%%%%%%%%%%%%%%%%%%%%%%%%%%%%%%%%%%%%%%%%%%%%%%%%%%%%%%%%%%%

\subsection{The case of ${\bf C}$-subsets of $\Z^r$}
\label{subsec:C}

In this Section~\ref{subsec:C}, we analyze the condition that the left-hand side of equation~\eqref{eq:general_equ} belongs to some given ${\bf C}$-subset $C\subseteq \Z^r$. Hence, for given integers $d_1,\dots, d_r$ and nonzero integer $D$, we analyze the equation
\begin{equation}
\label{eq:general_congruence_equ}
\sum_{j=1}^r d_j\cdot \left(\sum_{k=1}^s b_{k,j}y_k + \sum_{i=1}^m c_{i,j}a^{(i)}_{n_i}\right)\equiv 0\pmod{D}.
\end{equation}
We note that in equation~\eqref{eq:general_congruence_equ}, the \emph{unknowns} are the $y_k$'s and the $n_i$'s, while $D$, the $d_j$'s, the $b_{k,j}$'s and the $c_{i,j}$'s are given integers. 
Since any linear recurrence sequence of integers (such as each $\{a^{(i)}_n\}_{n\ge 1}$)  is preperiodic modulo any given integer (such as $D$), we obtain that there exist finitely many tuples of non-negative integers 
$$(u_1,v_1,u_2,v_2,\cdots, u_{s+m},v_{s+m})$$
such that the set of tuples $(y_1,\dots, y_s,n_1,\dots, n_m)$ satisfying equation~\eqref{eq:general_congruence_equ} is a finite union of sets of the form
\begin{equation}
\label{eq:u-v}
\left\{(u_1y_1+v_1,\cdots, u_sy_s+v_s,u_{s+1}n_1+v_{s+1},\cdots, u_{s+m}n_m+ v_{s+m})\colon \text{for }y_i\in\Z\text{ and }n_i\in\N\right\}.
\end{equation}
As before (see Lemma~\ref{lem:still_simple}~(i)), replacing each linear recurrence sequence $\{a^{(i)}_n\}_{n\in\mathbb{N}}$ by $\{a^{(i)}_{u_{s+i}n+v_{s+i}}\}_{n\in\mathbb{N}}$ (for $i=1,\dots, m$) leads to another $m$ linear recurrence sequences with simple characteristic roots that all live in the set $S$. Furthermore, replacing each $y_k$ by $u_ky_k+v_k$ leads to replacing the subgroup $H$ with a coset $\delta_1+H_1$ of a subgroup $H_1\subseteq H$. Then, once again using Remark~\ref{rem:repeatedly}, we conclude  that replacing each $y_k$ by $u_ky_k+v_k$ (for $1\le k\le s$) and replacing each $n_i$ by $u_{s+i}n_i+v_{s+i}$ (for $1\le i\le m$) leads to replacing the $S$-arithmetic subset $\mathcal{F}=\mathcal{S}+H$ by another $S$-arithmetic set $\mathcal{S}_1+H_1$, where $\mathcal{S}_1$ is the $S$-arithmetic groupless set 
$$\mathcal{S}_1:=\left\{\sum_{k=1}^s v_kR_k+\sum_{i=1}^m a^{(i)}_{u_{s+i}n_i+v_{s+i}}\cdot P_i\colon n_i\in\mathbb{N}\right\},$$
while $H_1$ is the subgroup of $H$ spanned by $u_1R_1,\cdots, u_kR_k$. 
Therefore, from now on, we may assume that in the right-hand side of the equation~\eqref{eq:general_equ} we only have ${\bf L}$-subsets $L_j$.

%%%%%%%%%%%%%%%%%%%%%%%%%%%%%%%%%%%%%%%%%%%%%%%%%%%%%%%%%%%%%%%%%%%%%%%%%%%%%%%

\subsection{The case of ${\bf L}$-subsets of $\Z^r$} 
\label{subsec:L}

So, with the notation as in equation~\eqref{eq:general_equ}, letting $|J|=u$, we need to find $(y_1,\dots, y_s,n_1,\dots, n_m)\in \Z^s\times \mathbb{N}^m$ such  that
\begin{equation}
\label{eq:general_equ 2}
\left(\sum_{k=1}^s b_{k,j}y_k + \sum_{i=1}^m c_{i,j}a^{(i)}_{n_i}\right)_{1\le j\le r}\in  \bigcap_{1\le h\le u}L_h,
\end{equation}
where for each $h\in\{1,\dots, u\}$, the subset $L_h\subseteq \Z^r$ is cut out  by the linear equation
\begin{equation}
\label{eq:linear_equation}
\sum_{j=1}^r d_{h,j}x_j=0,
\end{equation}
for some given integers $d_{h,j}$. Combining equation~\eqref{eq:general_equ 2} with the linear equations~\eqref{eq:linear_equation} (for $1\le j\le u$), we obtain a system of $u$ linear equations:
\begin{equation}
\label{eq:linear_equation 2}
\sum_{k=1}^s \left(\sum_{j=1}^r b_{k,j}d_{h,j}\right)\cdot y_k=- \sum_{i=1}^m \left( \sum_{j=1}^r d_{h,j}c_{i,j}\right)\cdot a^{(i)}_{n_i}\text{ for }1\le h\le u.
\end{equation}
For each $h=1,\dots, u$ and for each $k=1,\dots, s$, we let 
$$e_{h,k}:=\sum_{j=1}^r b_{k,j}d_{h,j};$$
also, for each $h=1,\dots, u$ and for each $i=1,\dots, m$, we let
$$f_{h,i}:=-\sum_{j=1}^r d_{h,j}c_{i,j}.$$
Clearly, $e_{h,k}\in\Z$ (for each $1\le h\le u$ and $1\le k\le s$) and $f_{h,i}\in\Z$ (for each $1\le h\le u$ and $1\le i\le m$); also, we recall that each $a^{(i)}_{n_i}\in\Z$ for $1\le i\le m$ and each $n_i\in\mathbb{N}$. So, we have a linear system of $u$ equations with unknowns $y_1,\dots, y_s$:
\begin{equation}
\label{eq:linear_equation 3}
\sum_{k=1}^s e_{h,k}y_k =  \sum_{i=1}^m f_{h,i}a^{(i)}_{n_i}\text{ for }1\le h\le u.
\end{equation}
In order for the system~\eqref{eq:linear_equation 3} be solvable for some $y_1,\dots, y_s\in\Q$,  there are finitely many linear relations to be satisfied by the right-hand side terms from~\eqref{eq:linear_equation 3}; since each $e_{h,k}$ is an integer, these linear relations will have integer coefficients as well. Hence, there are finitely many equations, say $w$ equations (for some $w\ge 0$), and there are some given integers $g_{\ell,h}$ with $1\le \ell\le w$ and $1\le h\le u$ such that:
\begin{equation}
\label{eq:linear_equation 4}
\sum_{h=1}^u g_{\ell,h}\cdot \sum_{i=1}^m f_{h,i}a^{(i)}_{n_i}=0,
\end{equation}
which need to be satisfied (for $1\le \ell\le w$) by the positive integers $n_i$ in order to find a solution $(y_1,\dots, y_s)$ (even over the rationals) for the system~\eqref{eq:linear_equation 3}. Letting 
$$z_{\ell,i}:=\sum_{h=1}^u g_{\ell,h}\cdot f_{h,i}\text{ for each }\ell\in\{1,\dots, w\}\text{ and for each }i\in\{1,\dots, m\},$$
then equations~\eqref{eq:linear_equation 4} translate to equations:
\begin{equation}
\label{eq:linear_equation 5}
\sum_{i=1}^m z_{\ell,i}\cdot a^{(i)}_{n_i}=0\text{ for }\ell=1,\dots, w.
\end{equation}
Since each sequence $\{a^{(i)}_n\}_{n\in\mathbb{N}}$ is a linear recurrence sequence with simple characteristic roots $\tilde{r}_{i,1},\dots, \tilde{r}_{i,m_i}\in S$ (for some $m_i\in\mathbb{N}$), then the equations~\eqref{eq:linear_equation 5} translate into equations:
\begin{equation}
\label{eq:linear_equation 50}
\sum_{i=1}^m\sum_{j=1}^{m_i} \tilde{z}_{\ell,i,j} \cdot \tilde{r}_{i,j}^{n_i}=0\text{ for }\ell=1,\dots, w,
\end{equation}
for some constants $\tilde{z}_{\ell,i,j}\in\C$.  
The famous theorem of Laurent \cite{Laurent} (which solves the classical Mordell-Lang conjecture for algebraic tori) yields that the set of tuples $(n_1,\dots, n_m)\in\mathbb{N}^m$ satisfying the equations~\eqref{eq:linear_equation 50} is a union  of finitely many sets $\tilde{S}_k$ of the following form. Each set $\tilde{S}_k$ consists of all tuples $(n_1,\dots, n_m)\in\mathbb{N}^m$ satisfying finitely many equations of the form:
\begin{equation}
\label{eq:M-L-tori 1}
n_j=\tilde{n}_{0,j}\text{ for some given }\tilde{n}_{0,j}\in\mathbb{N},
\end{equation}
or 
\begin{equation}
\label{eq:M-L-tori 2}
n_j\equiv \tilde{n}_{0,j}\pmod{\tilde{N}_{0,j}}\text{ for some given }\tilde{n}_{0,j},\tilde{N}_{0,j}\in\mathbb{N},
\end{equation}
or
\begin{equation}
\label{eq:M-L-tori 3}
\tilde{n}_{0,j}\cdot n_j=\tilde{n}_{0,j_1}\cdot n_{j_1}\text{ for some given }j\ne j_1\text{ and }\tilde{n}_{0,j},\tilde{n}_{0,j_1}\in\mathbb{N}.
\end{equation}
\begin{rem}
\label{rem:Laurent}
This is the only part of our proof which requires that in the definition of $S$-arithmetic sets the corresponding linear recurrence sequences have distinct characteristic roots. Indeed, otherwise, the problem becomes \emph{very difficult}. For example, consider the case when $\Gamma_0=\Z^2$, $\Gamma=\Z\times \{0\}$ and $\mathcal{F}=\mathcal{S}$ is the set of all elements of $\Z^2$ of the form:
\begin{equation}
\label{int:non-Laurent}
n_0^2\cdot (1,1) + \sum_{i=1}^m 2^{n_i}\cdot (1,-1)\text{ for arbitrary }n_0,n_1,\dots, n_m\in\mathbb{N}. 
\end{equation}
The set \eqref{int:non-Laurent} corresponds to the linear recurrence sequences $\{n_0^2\}_{n_0\ge 1}$ along with $\{2^{n_i}\}_{n_i\ge 1}$ (for $1\le i\le m$); their characteristic roots live in the powers-closed set $\{2^s\colon s\ge 0\}$, \emph{but} the first of these linear recurrence sequences has $1$ as a \emph{repeated characteristic root}. Then analyzing the counterpart of Theorem~\ref{thm:G} for this example leads to finding all solutions $(n_0,n_1,\dots, n_m)\in\N^{m+1}$ for the polynomial-exponential equation:
\begin{equation}
\label{eq:poly-expo}
n_0^2=\sum_{i=1}^m 2^{n_i}.
\end{equation}
The equation~\eqref{eq:poly-expo} is a \emph{very deep} Diophantine question, well-beyond the reach of the current known methods; for more details, see \cite{CGSZ}.  
\end{rem}

Thus, it suffices to prove that for each such set $\tilde{S}:=\tilde{S}_k\subseteq \mathbb{N}^m$ (satisfying finitely many equations of the form~\eqref{eq:M-L-tori 1},~\eqref{eq:M-L-tori 2}~and~\eqref{eq:M-L-tori 3}), the corresponding set of points~\eqref{eq:generic point} lying in $\mathcal{F}\cap\Gamma$ is an $S$-arithmetic subset in $\Gamma$. 

On the other hand, due to the simple form of the equations~\eqref{eq:M-L-tori 1},~\eqref{eq:M-L-tori 2}~and~\eqref{eq:M-L-tori 3}, the set 
\begin{equation}
\label{eq:still_groupless}
\left\{\sum_{i=1}^m a^{(i)}_{n_i}\cdot P_i\colon (n_1,\dots, n_m)\in\tilde{S}\right\}
\end{equation}
is still an $S$-arithmetic groupless set (see Lemma~\ref{lem:still_simple}~(i)).  Therefore, at the expense of replacing the original $S$-arithmetic groupless set $\mathcal{U}$ by the set from~\eqref{eq:still_groupless}, we may assume that the linear system of equations~\eqref{eq:linear_equation 3} is solvable (in $\Q$) for each $(n_1,\dots,n_m)\in\mathbb{N}^m$. 

So, at the expense of re-shuffling our variables $(y_1,\dots,y_s)$ (which amounts to re-labelling the points $R_1,\dots, R_s$), we may assume that $y_1,\dots, y_t$ (with $t\le s$) are free variables for the linear system~\eqref{eq:linear_equation 3} and the general solution in $\Q$ to the system~\eqref{eq:linear_equation 3} consists of tuples $(y_1,\dots, y_s)$ with the property that $y_1,\dots, y_t$ are arbitrary rational numbers, while for $j=1,\dots,s-t$, we have
\begin{equation}
\label{eq:linear_equation 12}
y_{t+j}=\sum_{k=1}^t \tilde{a}_{j,k}\cdot y_k + \sum_{\substack{1\le i\le m\\1\le h\le u}} \tilde{b}_{j,h}\cdot f_{h,i}\cdot a^{(i)}_{n_i},
\end{equation} 
for some given constants $\tilde{a}_{j,k},\tilde{b}_{j,h}\in\Q$, while the $n_i$'s from~\eqref{eq:linear_equation 12} are arbitrary positive integers. For each $j=1,\dots, s-t$ and each $i=1,\dots, m$, we let 
$$\tilde{c}_{j,i}:=\sum_{h=1}^u \tilde{b}_{j,h}f_{h,i};$$
so, we can re-write equation~\eqref{eq:linear_equation 12} as follows:
\begin{equation}
\label{eq:linear_equation 13}
y_{t+j}=\sum_{k=1}^t \tilde{a}_{j,k}\cdot y_k + \sum_{i=1}^m \tilde{c}_{j,i}\cdot a^{(i)}_{n_i}.
\end{equation}

%%%%%%%%%%%%%%%%%%%%%%%%%%%%%%%%%%%%%%%%%%%%%%%%%%%%%%%%%%%%%%%%%%%%%%%%%%%%%%%

\subsection{Conclusion for our proof of Theorem~\ref{thm:G}}
\label{subsec:conclusion}

Knowing that the constants  $\tilde{a}_{j,k}$ and $\tilde{c}_{j,i}$ from equation~\eqref{eq:linear_equation 13} are rational numbers, coupled with the fact that each linear recurrence sequence of integeres is preperiodic with respect to any given moduli, then $y_{t+j}\in\Z$ for each $j=1,\dots, s-t$ in  equation~\eqref{eq:linear_equation 13} yields that the corresponding set of all tuples  $(y_1,\dots, y_t,n_1,\dots,n_{m})\in  \Z^t\times \mathbb{N}^{m}$ consists of finitely many sets of the form
\begin{equation}
\label{eq:arith_progr 111}
\left\{\left(u_1y_1+v_1,\cdots, u_ty_t+v_t,u_{t+1}n_1+v_{t+1},\cdots , u_{t+m}n_{m}+v_{t+m}\right)\colon (y_1,\dots, y_t,n_1,\dots, n_{m})\in\Z^t\times \mathbb{N}^{m}\right\}
\end{equation}
for some given integers $u_j,v_j$ for $1\le j\le t+m$. So, we consider now a given choice of integers $u_j$ and $v_j$ as in equation~\eqref{eq:arith_progr 111} so that for each  $(y_1,\dots, y_t,n_1,\dots,n_{m})\in  \Z^t\times \mathbb{N}^{m}$, we have that $y_{t+1},\dots, y_s$ given as in~\eqref{eq:linear_equation 13} provide a solution over the integers to the system~\eqref{eq:linear_equation 3}.

Next, we show that the set of points in $\mathcal{F}$ corresponding to a subset $\tilde{E}\subseteq \Z^t\times \mathbb{N}^{m}$ of the form~\eqref{eq:arith_progr 111} is an $S$-arithmetic set. Indeed, we note first that replacing each $n_i$ (for $1\le i\le m$) by $u_{t+i}\cdot n_i+v_{t+i}$ yields that the corresponding points in $\mathcal{U}$: 
$$\sum_{i=1}^m a^{(i)}_{u_{t+i}n_i+v_{t+i}}\cdot P_i\text{ (as we vary $(n_1,\dots, n_m)\in\mathbb{N}^m$)}$$
still form an $S$-arithmetic groupless set (once again, see Lemma~\ref{lem:still_simple}~(i)). Second, we claim that the set of all points in $H$ of the form:
\begin{equation}
\label{eq:arith_set}
\sum_{k=1}^t (u_ky_k+v_k)\cdot R_k+ \sum_{j=1}^{s-t} y_{t+j}\cdot R_{t+j},
\end{equation}
with $y_{t+1},\dots, y_s$ given as in equation~\eqref{eq:linear_equation 13} (where each $y_i$ is replaced by $u_iy_i+v_i$ for $i=1,\dots, t$) is actually an $S$-arithmetic set. Indeed, the set from~\eqref{eq:arith_set} can be re-written as the sum of an $S$-arithmetic groupless subset (see also Lemma~\ref{lem:still_simple} and Remark~\ref{rem:repeatedly}): 
\begin{equation}
\label{eq:still_groupless 11}
\sum_{k=1}^t v_k\cdot R_k + \sum_{j=1}^{s-t}\left(\sum_{k=1}^t \tilde{a}_{j,k}v_k + \sum_{i=1}^{m}  \tilde{c}_{j,i}a^{(i)}_{u_{t+i}n_i+v_{t+i}}\right)\cdot R_{t+j}
\end{equation}
(as we vary $n_1,\dots, n_m$ freely in $\mathbb{N}$) 
with the subgroup of $H$ consisting of all points:
\begin{equation}
\label{eq:still_set 11}
\sum_{k=1}^t  u_ky_k\cdot R_k + \sum_{j=1}^{s-t}\left(\sum_{k=1}^t \tilde{a}_{j,k}u_ky_k\right)\cdot R_{t+j} ,
\end{equation}
as we vary $y_1,\dots, y_t$ freely in $\Z$. 

This concludes our proof of Theorem~\ref{thm:G}. 

%%%%%%%%%%%%%%%%%%%%%%%%%%%%%%%%%%%%%%%%%%%%%%%%%%%%%%%%%%%%%%%%%%%%%%%%%%%%%%%%

%%%%%%%%%%%%%%%%%%%%%%%%%%%%%%%%%%%%%%%%%%%%%%%%%%%%%%%%%%%%%%%%%%%%%%%%%%%%%%%
%%%%%%%%%%%%%%%%%%%%%%%%%%%%%%%%%%%%%%%%%%%%%%%%%%%%%%%%%%%%%%%%%%%%%%%%%%%%%%%

\section{Proof of Theorem~\ref{thm:main}}
\label{sec:proofs}

Theorem~\ref{thm:main} follows now as an easy corollary of Theorem~\ref{thm:G}. 

So, we let $G$, $K$, $X$, $\Gamma$, $\Fq$, $F$ and $S_F$ be as in Theorem~\ref{thm:main} (see also Notations~\ref{not:semi}~and~\ref{def:still F-set}). We let $\tilde{\Gamma}$ be the finitely generated $\Z[F]$-submodule of $G(K)$ spanned by $\Gamma$; in particular, $\tilde{\Gamma}$ is also a finitely generated subgroup of $G(K)$. Using Theorem~\ref{thm:R-T}, the intersection $X(K)\cap \tilde{\Gamma}$ is a finite union of $F$-sets in $\tilde{\Gamma}$ (based in $G(K)$). Furthermore, according to Remark~\ref{rem:still F-set}, we have that each $F$-set in $\tilde{\Gamma}$ is also an $S_F$-arithmetic set in $\tilde{\Gamma}$ (based in $G(K)$). Finally, using Theorem~\ref{thm:G} (with $\Gamma_0:=G(K)$) coupled with the fact that 
$$X(K)\cap\Gamma=\left(X(K)\cap\tilde{\Gamma}\right)\cap \Gamma,$$
we conclude that $X(K)\cap\Gamma$ is a finite union of $S_F$-arithmetic sets in $\Gamma$ (based in $G(K)$), as desired.

%%%%%%%%%%%%%%%%%%%%%%%%%%%%%%%%%%%%%%%%%%%%%%%%%%%%%%%%%%%%%%%%%%%%%%%%%%%%%%%

\end{document}